\newtheorem{lemma}{Lemma}
\newtheorem{theo}{Theorem}
\newtheorem{proposition}{Proposition}
\theoremstyle{definition}
\theoremstyle{remark}
\newtheorem{remark}{Remark}
\theoremstyle{remark}
\newcounter{primer}
\newcommand{\prr}{\par\refstepcounter{primer}%
\textbf{Example \arabic{primer}.} } 
\newcounter{sluchaj}
\newcommand{\slu}{\par\refstepcounter{sluchaj}%
\textbf{Case \arabic{sluchaj}.} } 
\newcounter{line}
\newcounter{pict}
\newcommand\kk{{\Bbbk}}
\newcommand\ZZ{{\mathbb Z}}
\newcommand\RR{{\mathbb R}}
\newcommand\QQ{{\mathbb Q}}
\newcommand\QgO{{\mathbb Q}_{\geqslant 0}}
\newcommand\ZgO{{\mathbb Z_{\geqslant 0}}}
\newcommand\vep{{\varepsilon}}
\newcommand\conv{{\rm conv}}
\newcommand\Lie{{\rm Lie}}
\def\qmatrix#1{\left(\begin{array}{*{20}r}#1\end{array}\right)}
\def\kratno{\mathbin{\lower.3ex\hbox{$\m@th\vdots$}}}
\newcounter{itemnumber}
\begin{document}

\title[MAXIMAL TORUS ORBITS CLOSURES]{SIMPLE MODULES OF EXCEPTIONAL GROUPS\\WITH NORMAL CLOSURES OF MAXIMAL TORUS ORBITS}

\author
{Ilya~I.~Bogdanov}
\thanks{The first author is partially supported by the RFBR grant~\hbox{20-01-00096}, and by ADTP of the Ministry of Science 
and Education of Russian Federation, project \hbox{2.1.1/11133}} 
\address{Department of Higher Mathematics, Moscow Institute of Physics and Technology}
\email{ilya.i.bogdanov@gmail.com}
\author
{Karine~G.~Kuyumzhiyan}
\thanks{The second author is partially supported by <<EADS Foundation Chair in Mathematics>>,
by RFBR grant~\hbox{09-01-00648a}, by Russian-French Poncelet laboratory (UMI 2615
CNRS), and by Dmitry Zimin fund <<Dynasty>>}
\address{Chair of Higher Algebra, Lomonossov Moscow State University; Institut Fourier, Grenoble, France; Independent University of Moscow and Poncelet Laboratory}
\email{karina@mccme.ru}

\begin{abstract}
Let~$G$ be an exceptional simple algebraic group, and let $T$ be a maximal torus in~$G$. In this paper, for every such~$G$, we find all simple rational $G$-modules $V$ with the following property: for every 
vector $v\in V$, the closure of its $T$-orbit is a normal affine variety. For all $G$-modules without this property we present a $T$-orbit with the non-normal closure. To solve 
this problem, we use a combinatorial criterion of normality formulated in the terms of weights of a simple $G$-module. This paper continues~\cite{KK} and~\cite{Karina2}, where the same problem was solved for classical linear groups.
\medskip

{\bf Keywords:} toric variety, \ normality, \ irreducible representation, \ exceptional group, \ weight decomposition.
\end{abstract}

\date{\today}

\maketitle


%
%

%

\section*{Introduction}

Let $G$ be a connected simple algebraic group over an algebraically closed field~$\kk$ of characteristic zero. Fix a maximal torus~$T$ in~$G$. 
Let~$V$ be a finite dimensional rational $G$-module. In this paper we study the normality of $T$-orbits closures in~$V$. The paper is devoted to the 
question of finding all simple modules~$V$ of the exceptional group~$G$ with the following property: for every vector $v\in V$, the closure of its 
orbit $\overline{Tv}$ is a normal (affine) algebraic variety. For the adjoint module of a semisimple group~$G$, this property was studied in~\cite{JM}. 
In the case $G=SL(n)$ this particular problem was also solved earlier in~\cite[Example 3.7]{Stu} and~\cite{Stumf}. In the previous papers of the second 
author~\cite{KK,Karina2} all the simple $G$-modules having this property were found for all classical groups~$G$. For the completeness, the results 
of~\cite{KK,Karina2} are included in the table in Theorem~\ref{mth}.

In this paper we study this property for exceptional simple algebraic groups; namely, for $E_6$, $E_7$, $E_8$, $G_2$, $F_4$, 
in all their simple modules. Recall that each simple module is defined uniquely up to isomorphism by its highest weight~$\lambda$. 
Here, $\lambda$ runs over the set of the dominant weights, that are, the weights having the form $a_1\pi_1+\ldots+a_r\pi_r$, where 
$\pi_1,\ldots,\pi_r$ are fundamental weights, and $a_1,\ldots,a_r$ are nonnegative integers. We enumerate fundamental weights as in~\cite[Chapter~4]{VO}.

The final result is the following theorem.

\begin{theo}\label{mth}
For the following types of simple algebraic groups and the corresponding modules, and for their dual modules, 
the closures of all maximal torus orbits are normal. In each other case, the module contains a maximal torus orbit with non-normal closure.
\begin{center}
\begin{longtable}{|c|c|c|}
\hline
Root system & Highest weight & Checked in \endhead
\hline 
$A_{n}, n\geqslant 1$& $\pi_1$ & \cite{KK}\\
\hline
$A_{n}, n\geqslant 1$& $\pi_1+\pi_{n}$ & \cite{KK,JM,Stu,Stumf}\\
\hline
$A_1$ &$3\pi_1$ & \cite{KK}\\
\hline
$A_1$ &$4\pi_1$  & \cite{KK}\\
\hline
$A_2$&$2\pi_1$ & \cite{KK}\\
\hline
$A_3$&$\pi_2$ & \cite{KK}\\
\hline
$A_4$&$\pi_2$ & \cite{KK}\\
\hline
$A_5$&$\pi_2$ & \cite{KK}\\
\hline
$A_5$&$\pi_3$ & \cite{KK}\\
\hline
$B_n, n\geqslant 2$&$\pi_1$ & \cite{Karina2}\\
\hline
$B_2$&$\pi_2$ & \cite{Karina2}\\
\hline
$B_2$ & $2\pi_2$ & \cite{Karina2}\\
\hline
$B_3$&$\pi_3$ & \cite{Karina2}\\
\hline
$B_4$&$\pi_4$ & \cite{Karina2}\\
\hline
$C_n, n\geqslant 3$&$\pi_1$ & \cite{Karina2}\\
\hline
$C_3$&$\pi_2$ & \cite{Karina2}\\
\hline
$C_4$ &$\pi_2$ & \cite{Karina2}\\
\hline
$D_n, n\geqslant 4$&$\pi_1$ &  \cite{Karina2}\\
\hline
$D_4$ & $\pi_2$ & \cite{Karina2}\\
\hline
$D_4$&$\pi_3$ & \cite{Karina2}\\
\hline
$D_4$&$\pi_4$ & \cite{Karina2}\\
\hline
$D_5$&$\pi_4$ & \cite{Karina2}\\
\hline
$D_6$&$\pi_5$ & \cite{Karina2}\\
\hline
$D_6$&$\pi_6$ & \cite{Karina2}\\
\hline
$F_4$ & $\pi_1$ &  \rm Case $\ref{sl1}$ \\
\hline
$G_2$ & $\pi_1$ &  \rm Case $\ref{sl2}$ \\
\hline
\end{longtable}
\end{center}
\end{theo}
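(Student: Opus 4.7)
The plan is to apply the combinatorial normality criterion for affine toric varieties stated in terms of the weight support. Fix a vector $v \in V(\lambda)$ and let $S \subseteq \Omega(\lambda)$ be the set of weights $\mu$ with $v_\mu \ne 0$. Up to rescaling components, $\overline{Tv}$ depends only on $S$, and it is isomorphic to the affine toric variety $\mathrm{Spec}\,\kk[\mathbb{Z}_{\geqslant 0}(S-\mu_0)]$ for any $\mu_0 \in S$. Normality is then equivalent to the semigroup $\mathbb{Z}_{\geqslant 0}(S-\mu_0)$ being saturated in the lattice $\mathbb{Z}(S-\mu_0)$, i.e.\ every lattice point of the rational cone $\QgO(S-\mu_0)$ must lie in the semigroup. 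Since also $\overline{T(w\cdot v)} \simeq \overline{Tv}$ for any $w$ in the Weyl group $W$, we may work with supports $S$ up to $W$-action, and since $V(\lambda)^*$ has weight set $-\Omega(\lambda)$, the answer for $V(\lambda)$ and $V(\lambda)^*$ is the same. Thus the global problem reduces to a purely combinatorial question about finite subsets of the weight polytope of each $V(\lambda)$.

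For the positive entries in the table concerning exceptional groups, namely $G_2$ with $\pi_1$ (Case~\ref{sl2}) and $F_4$ with $\pi_1$ (Case~\ref{sl1}), I would verify saturation for every possible support $S$. The weights of $V(\pi_1)$ for $G_2$ form a hexagon together with the origin in a rank-$2$ lattice, so after accounting for the Weyl symmetry (of order $12$) one is left with a very small number of support types to test; in each case one checks directly that the cone $\QgO(S-\mu_0)$ contains no extra lattice points besides $\mathbb{Z}_{\geqslant 0}(S-\mu_0)$. The $F_4,\pi_1$ case is similar in spirit but more involved: the $26$ weights of the standard module form a rank-$4$ configuration with large stabilizer, so one first classifies orbits of support subsets under $W(F_4)$, then for each orbit representative checks saturation by computing the extremal rays of the cone and showing that every primitive lattice vector in the cone is a non-negative integer combination of weights in $S-\mu_0$.

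For all remaining simple modules of $E_6, E_7, E_8, F_4, G_2$ one must exhibit, for each dominant weight $\lambda$ not in the table, a support $S \subseteq \Omega(\lambda)$ violating saturation. The strategy is to find small universal "bad" configurations — typically three or four weights $\mu_1,\dots,\mu_k$ with a lattice relation of the form $\sum n_i(\mu_i-\mu_1) = 2(\mu_j-\mu_1)$ where the right-hand side is not expressible as a non-negative integer combination — and to show that such a configuration embeds in $\Omega(\lambda)$ for every $\lambda$ outside the table. Concretely, for a module with highest weight $\lambda = a_1\pi_1+\dots+a_r\pi_r$ one often locates the bad configuration among the weights close to $\lambda$ (obtained by subtracting simple roots); alternatively, if $\lambda' \leqslant \lambda$, then $\Omega(\lambda')$ is "contained" in $\Omega(\lambda)$ in an appropriate sense, allowing propagation of non-normality from smaller modules to larger ones.

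The principal obstacle is the last step: ensuring that the list of bad configurations is complete, i.e.\ that every dominant $\lambda$ for an exceptional $G$ outside the short positive list genuinely admits such a support. This requires a careful induction on the highest weight, essentially reducing the infinitely many $\lambda$ to the finitely many minimal-counterexample candidates, for each of which an explicit bad support must be written down. A secondary technical difficulty is verifying the saturation condition for $F_4,\pi_1$: although finite, the combinatorics in rank $4$ with $26$ weights is substantial, and the case analysis must be organized using the $W(F_4)$-action to remain tractable.
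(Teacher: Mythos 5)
Your overall architecture --- reduce to saturation of weight supports, verify the two exceptional positive cases directly, and exhibit non-saturated supports propagated by the dominance order everywhere else --- is indeed the paper's architecture. But your foundational reduction is stated incorrectly, and the error is substantive, not notational: for the \emph{affine} orbit closure $\overline{Tv}\subset V$ one has $\kk[\overline{Tv}]=\kk[\ZgO(\mu_1,\dots,\mu_s)]$ with the weights \emph{untranslated}, and normality is equivalent to $\ZgO(\mu_1,\dots,\mu_s)=\ZZ(\mu_1,\dots,\mu_s)\cap\QgO(\mu_1,\dots,\mu_s)$ (the paper's Proposition, after \cite{KKMSD}). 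Your claim that $\overline{Tv}\simeq\mathrm{Spec}\,\kk[\ZgO(S-\mu_0)]$ for any $\mu_0\in S$ is false: for $T=\kk^{*}$ acting with weights $S=\{2,3\}$ the closure is the non-normal cuspidal curve $x^3=y^2$, while the translated set $S-\mu_0=\{0,1\}$ generates a saturated semigroup, so your criterion would declare it normal. Translation is legitimate for orbit closures in ${\mathbb P}(V)$, not in $V$; every saturation check in your plan as written would test the wrong condition.

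Even after correcting the criterion, the two steps you yourself flag as obstacles are precisely where the paper's content lies, and your proposed substitutes would not go through as stated. For the negative cases, dominance induction only moves within a single coset of $\Xi$ in $\Lambda$, and not every nonzero dominant weight dominates a fundamental one (already $2\pi_1\not\succeq\pi_1$ for $A_1$); the paper uses Lemma~\ref{l.ovl} in this way only for $E_6$, $F_4$ and $G_2$, whereas for $E_8$ and $E_7$ it argues directly (Lemmas~\ref{ce}, \ref{nce}, \ref{ce7}, \ref{nce7}): using Weyl reflections such as $s_{\alpha_0}$, sign changes in pairs of coordinates, and midpoint arguments inside $(\lambda+\Xi)\cap P(\lambda)$, it shows that one fixed bad weight ($\lambda^\circ=\vep_1+\vep_2$ for $E_8$; all roots, resp.\ $\lambda_*$, for the two lattice cosets of $E_7$) lies in $M(\lambda)$ for \emph{every} $\lambda\neq 0$, and then writes down one explicit ENSS per case, certified by a coordinate inspection or a discriminating linear function. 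None of these explicit configurations, nor the coset analysis, appears in your sketch --- this is the missing induction you acknowledge. For $F_4$, $\pi_1$, enumerating $W(F_4)$-orbits of supports among the short roots and computing extremal rays for each is infeasible by hand ($2^{24}$ supports against a Weyl group of order $1152$); the paper's Case~\ref{sl1} instead proves that every nonzero $4\times 4$ determinant of weights equals $\pm 1$ or $\pm\frac12$, which forces any hypothetical ENSS, up to the $W$-action, to have $v_0=\frac12(\vep_1+\vep_2+\vep_3+\vep_4)$ over the unimodular tuple $\vep_1,\dots,\vep_4$, and a one-line rewriting using a half-integer weight $v_k$ gives the contradiction. (A minor slip: $M(\pi_1)$ for $F_4$ consists of the $24$ short roots; the $26$-dimensional module has a two-dimensional zero weight space, and zero weights do not affect saturation.)
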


In Section~\ref{predv} we present some basic notions dealing with the root systems. Also we recall a combinatorial criterion of 
simultaneous normality of all $T$-orbits closures. It allows to reduce the study of normality of $T$-orbits closures in a simple module~$V(\lambda)$ 
to the check of saturatedness of all subsets of the weight system~$M(\lambda)$.

In the next sections, we check saturatedness of all subsets in the weight systems $M(\lambda)$ of modules mentioned in Theorem~\ref{mth}. 
In every other case a non-saturated subset is indicated. To reach this goal, it is enough to treat minimal (with respect to inclusion) systems 
of weights which are not listed in Theorem~$\ref{mth}$. In each of these minimal cases, a non-saturated subset is constructed explicitly.

%

We check exceptional root systems individually, in alphabetical order. Since the construction of~$E_7$ uses the structure 
of~$E_8$, we first treat $E_8$, and then $E_7$. After that, we treat~$E_6$, using a more symmetric realization. In these cases there is 
no representation with all $T$-orbits closures being normal. Then we consider the root systems~$F_4$ and~$G_2$. In both these cases we show that 
only for the first fundamental representation all $T$-orbit closures are normal.


The authors are grateful to I.\,V.~Arzhantsev for the setting of the problem and for the fruitful discussions, and to L.~Manivel for useful explanations concerning exceptional root systems.

\medskip
%


\section{Preliminary information}
\label{predv}

Let $G$ be a connected simply connected semisimple algebraic group over an algebraically closed field~$\kk$ of characteristic~zero, let~$B$ be a 
Borel subgroup in~$G$, and let $T\subset B$ be a maximal torus. Denote by~$\Phi$ the root system of the Lie algebra $\Lie(G)$, 
corresponding to the torus~$T$. Let~$\Phi^+$ be the set of positive roots, and let $\Delta=\{\alpha_1,\ldots,\alpha_r\}$ be the simple roots 
in~$\Phi^+$ with respect to the Borel subgroup~$B$. Denote by $\pi_i$ the fundamental weight corresponding to the simple root~$\alpha_i$. 
It is well known that the weights $\pi_1,\ldots,\pi_r$ form a basis of the character lattice~$\Lambda=\Lambda(T)$ of the torus~$T$. 
In the case of a simply connected group $\Lambda(T)$ is actually the {\em weight lattice} of the root system~$\Phi$. The additive semigroup 
of~$\Lambda$ generated by fundamental weights 
coincides with the semigroup of dominant weights~$\Lambda_+$. The {\em root lattice}~$\Xi$ is the subgroup in~$\Lambda$ 
generated by the root system~$\Phi$. It is known that $\Xi\subset \Lambda$ is a sublattice of finite index, and the roots $\alpha_1,\ldots,\alpha_r$ form a basis of~$\Xi$. 
In the sequel we will treat weights $\mu\in \Lambda$ as points in the rational vector space $\Lambda_{\QQ} := \Lambda\otimes_\ZZ \QQ$. 
If the weight lattice~$\Lambda$ is realized in the space~$\QQ^n$, then we denote the standard basis in this space by~$\vep_1,\ldots,\vep_n$.

Let $W$ be the Weyl group of the root system~$\Phi$. Then $W$ can be realized as a finite group of linear transformations of~$\Lambda_\QQ$, 
generated by reflections $s_{\alpha}$, where $\alpha$ is a root, see~\cite{Hum}. Recall that the reflection $s_{\alpha}$ is given 
by the formula
$
  s_{\alpha}\colon \beta \mapsto \beta - \frac {2(\alpha, \beta)}{(\alpha, \alpha)}\alpha
$, the brackets $(\cdot,\cdot)$ denote the standard scalar product, which is always given by the identity Gram matrix in our realizations. 
Recall that for an irreducible root system~$\Phi$, the Weyl group acts transitively on the set of roots of the same length, see~\cite[Proposition 6.1.11]{Bour2}.

Let $v_1,\ldots,v_r$ be vectors in a rational vector space. For an arbitrary set of rational numbers $A$, we denote by $A(v_1,\ldots,v_r)$ 
the set of linear combinations of vectors $v_1,\ldots,v_r$ with coefficients from~$A$. The set of points $\{\mu_1, \ldots,\mu_s\}\subset\QQ^n$ 
is called {\it saturated} if
$$
\ZgO (\mu_1, \ldots,\mu_s) = \ZZ (\mu_1, \ldots,\mu_s) \cap \QgO (\mu_1, \ldots,\mu_s).
$$
The set of points $\{\mu_1, \ldots,\mu_s\}\subset\QQ^n$ is called {\it hereditarily normal} if each its subset is saturated.

Let $T$ be an algebraic torus and $\Lambda=\Lambda(T)$ be its character lattice. 
Every rational $T$-module $V$ can be decomposed in the following way:
$$
  V=\bigoplus_{\mu\in \Lambda} V_\mu, \quad\text{where}\quad
  V_\mu=\{v\in V \,|\, tv=\mu(t)v\quad \forall\, t\in T\}.
$$
We denote by $M(V)=\{ \mu\in \Lambda \,|\, V_\mu\ne 0 \}$ the set of weights of~$V$. Each nonzero vector~$v$ in $V$ has its {\it weight decomposition} 
$$
v=v_{\mu_1}+\dots+v_{\mu_s},\quad v_{\mu_i}\in V_{\mu_i},\quad v_{\mu_i}\ne 0.
$$ 
Recall that an irreducible affine algebraic variety~$X$ is called {\it normal} if its algebra of regular functions $\kk[X]$ is integrally closed in its field 
of fractions. The following statement is a well known combinatorial criterion of normality of a $T$-orbit closure, see~\cite[I, \S 1, Lemma~1]{KKMSD}.

\begin{proposition}
Let $V$ be a finite dimensional rational $T$-module and $v=v_{\mu_1}+\dots+v_{\mu_s}$ be the weight decomposition of a vector $v\in V$.
The closure $\overline{Tv}$ of the $T$-orbit of $v$ is normal if and only if $\{\mu_1,\ldots,\mu_s\}$ is saturated.
\end{proposition}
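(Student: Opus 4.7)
The plan is to identify $\overline{Tv}$ with an affine toric variety whose coordinate ring is an explicit semigroup algebra, and then invoke the classical description of the integral closure of such an algebra.

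First, I would restrict to the $T$-invariant subspace $W = \mathrm{span}_\kk(v_{\mu_1}, \ldots, v_{\mu_s})$, which contains the orbit $Tv$ entirely since $t\cdot v = \sum_i \mu_i(t)\, v_{\mu_i}$. Choosing the $v_{\mu_i}$ as a basis identifies $W$ with $\kk^s$; in these coordinates the orbit map becomes $t \mapsto (\mu_1(t),\ldots,\mu_s(t))$, a homomorphism of algebraic groups from $T$ to $(\kk^*)^s$. The dual comorphism $\kk[x_1,\ldots,x_s] \to \kk[T] = \kk[\Lambda]$ sends $x_i$ to the character $\mu_i$, so its image is the $\kk$-subalgebra $\kk[S] \subset \kk[\Lambda]$ generated by $\mu_1,\ldots,\mu_s$, where $S := \ZgO(\mu_1,\ldots,\mu_s)$. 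Since $\overline{Tv}$ is the scheme-theoretic image of $T$ in $W$, its defining ideal equals the kernel of this comorphism, and therefore $\kk[\overline{Tv}] \cong \kk[S]$.

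Next, the total ring of fractions of $\kk[S]$ coincides with the group algebra $\kk[\ZZ(\mu_1,\ldots,\mu_s)]$, since inverting the generators $\chi^{\mu_i}$ recovers $\chi^\mu$ for every $\mu$ in the subgroup they generate. A standard semigroup computation shows that $\chi^\mu$ with $\mu \in \ZZ(\mu_1,\ldots,\mu_s)$ is integral over $\kk[S]$ if and only if $m\mu \in S$ for some positive integer $m$, and this is in turn equivalent to $\mu \in \QgO(\mu_1,\ldots,\mu_s)$. Consequently the integral closure of $\kk[S]$ inside its fraction ring is the semigroup algebra of $\overline{S} := \ZZ(\mu_1,\ldots,\mu_s) \cap \QgO(\mu_1,\ldots,\mu_s)$.

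Combining the two steps, $\kk[\overline{Tv}] = \kk[S]$ is integrally closed exactly when $S = \overline{S}$, which is precisely the saturatedness condition of the statement. The main technical point is the equivalence between integrality of $\chi^\mu$ over $\kk[S]$ and the divisibility condition $m\mu \in S$. The easy direction comes from the monic relation $(\chi^\mu)^m - \chi^{m\mu} = 0$, while the opposite direction is extracted from a given monic integral equation by comparing weight components in the $\ZZ(\mu_1,\ldots,\mu_s)$-graded algebra $\kk[\ZZ S]$: the homogeneous summand of degree $m\mu$ on the right-hand side must lie in $\kk[S]$, forcing $m\mu \in S$. After this algebraic preparation, the result is exactly the classical statement cited in \cite{KKMSD}.
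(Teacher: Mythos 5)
The paper gives no proof of this proposition at all: it is quoted directly from \cite[I, \S 1, Lemma~1]{KKMSD}, and your argument is exactly the standard semigroup-algebra proof behind that citation, so your overall route is the right one. Your first step (cutting down to $W$, identifying the vanishing ideal of $\overline{Tv}$ with the kernel of the comorphism, hence $\kk[\overline{Tv}]\cong\kk[S]$ with $S=\ZgO(\mu_1,\dots,\mu_s)$) is correct, and so is the non-normality direction: for $\mu\in\bigl(\ZZ(\mu_1,\dots,\mu_s)\cap\QgO(\mu_1,\dots,\mu_s)\bigr)\setminus S$ the character $\chi^{\mu}$ lies in the fraction field, satisfies the monic equation $(\chi^{\mu})^m-\chi^{m\mu}=0$, yet is not in $\kk[S]$ by linear independence of characters.

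The direction ``saturated $\Rightarrow$ normal'' has a genuine gap, however. Your claim that ``the total ring of fractions of $\kk[S]$ coincides with the group algebra $\kk[\ZZ(\mu_1,\dots,\mu_s)]$'' is false: already for $S=\ZgO$ one has $\kk[S]=\kk[x]$, whose fraction field $\kk(x)$ is strictly larger than $\kk[x,x^{-1}]$. What is true is that \emph{localizing} $\kk[S]$ at the generating characters yields $\kk[\ZZ S]$. The misstatement is load-bearing, because your computation of the integral closure only tests monomials $\chi^{\mu}$, while an element of the fraction field integral over $\kk[S]$ is a priori neither in $\kk[\ZZ S]$ nor a sum of integral characters. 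Two standard insertions repair this: (a) since $\ZZ S$ is a free abelian group of finite rank, $\kk[\ZZ S]$ is a Laurent polynomial algebra, hence normal, and it has the same fraction field as $\kk[S]$, so the integral closure of $\kk[S]$ is contained in $\kk[\ZZ S]$; and (b) the integral closure of $\kk[S]$ inside the $\ZZ S$-graded algebra $\kk[\ZZ S]$ is itself graded (equivalently, $T$-stable), so it is spanned by the characters it contains --- this is the usual ``integral closure of a graded domain is graded'' lemma, proved by a leading-term induction with respect to a total order on $\ZZ S$, and it does \emph{not} follow from your weight-component comparison, which applies only when the integral element is a single character. With (a) and (b) added, your argument is complete and coincides with the classical proof that the paper invokes by reference.
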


For a simple $G$-module $V(\lambda)$ with the highest weight~$\lambda$, we define by $M(\lambda)$ the set of its weights with respect to the 
maximal torus~$T$.
%
Recall the description of the set~$M(\lambda)$. The {\it weight polytope} $P(\lambda)$ of the module $V(\lambda)$ is the convex hull 
of the $W$-orbit of the point $\lambda$ in~$\Lambda_\QQ$: $P(\lambda)=\conv\{w\lambda \,|\, w\in W\}$.
Then
$$
M(\lambda)=(\lambda+\Xi)\cap P(\lambda),
$$
see~\cite[Theorem 14.18]{FH} and~\cite[Exercises to Chapter~VIII \S 7]{Bour3}.

\begin{remark}\label{notdomin}
Actually, we could start with a non-dominant weight $\lambda$ in this construction. Then the formula $M(\lambda)=(\lambda+\Xi)\cap
P(\lambda)$ gives the set of weights of a representation such that its highest weight belongs to the
$W$-orbit of the vector~$\lambda$.
\end{remark}

There is a partial order on the vector space $\Lambda_\QQ$: $\lambda \succeq \mu $ if and only if $\lambda-\mu$ is an integer linear combination of 
positive roots with nonnegative coefficients.


In the sequel we refer to a nonsaturated subset as an {\it NSS}. By an {\it extended nonsaturated subset} we mean a nonsaturated subset $\{v_1, \dots, v_r\}$
augmented by a vector $v_0$ such that 
\begin{enumerate}
\item
$v_0 \in (\ZZ (v_1, v_2, \dots,
v_r) \cap \QgO (v_1, v_2, \dots, v_r))\setminus  \ZZ_{\geqslant 0} (v_1, v_2, \dots, v_r)$,
\item 
there exists a $\QgO$-representation
$$
v_0=q_1v_{i_1}+\ldots+q_sv_{i_s}, \quad v_{i_j}\in \{v_1, v_2, \dots, v_r\}
$$
with linearly independent vectors $v_{i_1},\ldots,v_{i_s}$ and coefficients $q_i\in [0,1)$. 
\end{enumerate}

\smallskip

The fractional part of a real value~$q$ is denoted by~$\{q\}$, and the integer part by~$\lfloor q \rfloor$. The following lemmas are useful in proofs by contradiction.

\begin{lemma}[{\cite[Lemma~3]{Karina2}}]\label{lemma01}
Suppose that a set $M=\{v_1, \dots, v_r\}$ is not saturated. Then there exists a vector~$v_0$ such 
that~$\{v_0; v_1, \dots, v_r\}$ is an ENSS.
\end{lemma}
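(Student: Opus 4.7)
The plan is to start with an arbitrary witness to non-saturatedness and massage it, via Carathéodory's theorem for convex cones followed by reduction modulo~$1$, into a vector with the extra linear-independence and coefficient bounds required by an ENSS.

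By hypothesis, the set $M=\{v_1,\dots,v_r\}$ is not saturated, so by definition there exists a vector
$$
w\in \bigl(\ZZ(v_1,\dots,v_r)\cap \QgO(v_1,\dots,v_r)\bigr)\setminus \ZgO(v_1,\dots,v_r).
$$
This $w$ already satisfies property~(i) of the ENSS definition; what is missing is a nonnegative rational representation using \emph{linearly independent} vectors with coefficients lying in $[0,1)$. First I would invoke Carathéodory's theorem for the convex cone $\QgO(v_1,\dots,v_r)$: since $w$ lies in this cone, it can be written as $w=\sum_{j=1}^s q_j v_{i_j}$ with $q_j\in \QgO$ and $v_{i_1},\dots,v_{i_s}$ linearly independent (rationality is preserved because one solves a rational linear system). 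This gives a representation of the correct form except that the coefficients $q_j$ need not lie in $[0,1)$.

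Next I would reduce coefficients modulo $1$. Set
$$
v_0 \;:=\; w - \sum_{j=1}^s \lfloor q_j\rfloor\, v_{i_j} \;=\; \sum_{j=1}^s \{q_j\}\, v_{i_j}.
$$
I claim $\{v_0;v_1,\dots,v_r\}$ is the desired ENSS. The second equality displays $v_0$ as a $\QgO$-combination of the linearly independent vectors $v_{i_1},\dots,v_{i_s}$ with coefficients $\{q_j\}\in[0,1)$, which is exactly property~(ii). For property~(i): $v_0$ lies in $\ZZ(v_1,\dots,v_r)$ because $w$ does and we subtracted an integer combination; and $v_0$ lies in $\QgO(v_1,\dots,v_r)$ by the same display. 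It remains to verify that $v_0\notin \ZgO(v_1,\dots,v_r)$. But if $v_0 = \sum_{i=1}^r k_i v_i$ with $k_i\in\ZgO$, then adding back the integer combination $\sum_j \lfloor q_j\rfloor v_{i_j}$ would express $w$ as a nonnegative integer combination of $v_1,\dots,v_r$, contradicting the choice of $w$.

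There is no real obstacle beyond checking these three membership conditions; the only subtlety is remembering that Carathéodory yields linearly independent generators with rational coefficients, so that the modulo-$1$ reduction stays inside the original $\ZZ$- and $\QgO$-spans of $\{v_1,\dots,v_r\}$. Conditions (i) and (ii) of the ENSS definition then follow immediately from the construction, completing the proof.
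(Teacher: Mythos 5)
Your proof is correct: the conic Carath\'eodory theorem produces a representation of the witness $w$ over linearly independent vectors with nonnegative rational coefficients, and subtracting the (nonnegative) integer parts yields a $v_0$ satisfying both ENSS conditions, with the key point --- that $v_0\in\ZgO(v_1,\dots,v_r)$ would force $w\in\ZgO(v_1,\dots,v_r)$ --- checked exactly as needed. The paper gives no proof of this lemma, citing Lemma~3 of \cite{Karina2} instead, and your argument is essentially the standard one given there, so the approaches coincide.
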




\begin{lemma}[{\cite[Lemma~1]{Karina2}}]\label{l.ovl}
Let $\lambda,\; \lambda'\in \Lambda_+$. Suppose that $\lambda \succeq \lambda'$, then $M(\lambda)\supseteq M(\lambda')$.
\end{lemma}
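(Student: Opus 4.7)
I would pass through the description $M(\mu)=(\mu+\Xi)\cap P(\mu)$ recalled just above, fix an arbitrary $\nu\in M(\lambda')$, and verify separately the two conditions defining membership in $M(\lambda)$: that $\nu\in\lambda+\Xi$, and that $\nu\in P(\lambda)$.

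The coset condition is immediate. By hypothesis, $\lambda-\lambda'$ is a nonnegative integer combination of positive roots, hence lies in the root lattice $\Xi$; combined with $\nu-\lambda'\in\Xi$, this gives $\nu-\lambda\in\Xi$.

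For the polytope condition I would prove the stronger inclusion $P(\lambda')\subseteq P(\lambda)$. Since $P(\lambda)=\conv(W\lambda)$ is convex and $W$-invariant, and $P(\lambda')=\conv(W\lambda')$, it is enough to show $\lambda'\in P(\lambda)$: then the whole orbit $W\lambda'$ lies in $P(\lambda)$ by $W$-invariance, and so does its convex hull $P(\lambda')$ by convexity.

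The main obstacle is thus the classical fact that a dominant $\lambda'$ with $\lambda\succeq\lambda'$ already lies in $\conv(W\lambda)$. I would prove it by contradiction via a separating hyperplane. Assuming $\lambda'\notin P(\lambda)$, pick a linear functional $\eta$ strictly separating $\lambda'$ from $P(\lambda)$, and replace it by a Weyl conjugate $\xi=w\eta$ lying in the closed dominant chamber (possible since $W$ permutes the chambers transitively). For such a dominant $\xi$ one has $(\xi,\alpha)\geq 0$ for every positive root $\alpha$, so the hypothesis $\lambda\succeq\lambda'$ yields $(\xi,\lambda-\lambda')\geq 0$. On the other hand, using $W$-invariance of the scalar product and of $P(\lambda)$, the separation condition rewrites as $(\xi,w\lambda')>(\xi,z)$ for every $z\in P(\lambda)$, and in particular $(\xi,w\lambda')>(\xi,\lambda)$. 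Combining with the classical inequality $(\xi,\lambda')\geq(\xi,w\lambda')$---which holds because $\lambda'-w\lambda'$ is a nonnegative combination of positive roots when $\lambda'$ is dominant---one obtains $(\xi,\lambda')>(\xi,\lambda)$, contradicting the first inequality.
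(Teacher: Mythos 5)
Your proof is correct. Note first that the paper itself contains no proof of this lemma: it is imported verbatim from \cite{Karina2} (Lemma~1 there), so there is no internal argument to compare against. Checking your steps: the coset condition is fine, since $\lambda-\lambda'\in\ZgO(\Phi^+)\subset\Xi$ by the definition of $\succeq$; the reduction of $P(\lambda')\subseteq P(\lambda)$ to $\lambda'\in P(\lambda)$ via $W$-invariance and convexity is fine; and your separating-hyperplane argument for the classical fact is complete --- strict separation exists because $P(\lambda)$ is compact and convex, moving $\eta$ into the closed dominant chamber is legitimate since $W$ acts transitively on the Weyl chambers, and both auxiliary inequalities, $(\xi,\lambda-\lambda')\geq 0$ and $(\xi,\lambda'-w\lambda')\geq 0$, hold because a dominant $\xi$ pairs nonnegatively with every positive root while $\lambda-\lambda'$ and $\lambda'-w\lambda'$ (for dominant $\lambda'$) are nonnegative combinations of positive roots. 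The chain $(\xi,\lambda)\geq(\xi,\lambda')\geq(\xi,w\lambda')>(\xi,\lambda)$ then gives the contradiction as you say. For comparison, the shorter route standard in the literature (and implicit in the description $M(\lambda)=(\lambda+\Xi)\cap P(\lambda)$ quoted from \cite{FH} and \cite{Bour3}) uses the order-theoretic characterization of weights: $\mu\in M(\lambda)$ if and only if $\mu\in\lambda+\Xi$ and the dominant $W$-conjugate $\mu^{+}$ satisfies $\mu^{+}\preceq\lambda$; with that in hand the lemma is immediate from transitivity of $\succeq$. Your version trades this citation for a self-contained convex-geometric argument, essentially reproving one direction of the equivalence between $\mu^{+}\preceq\lambda$ and $\mu\in P(\lambda)$; that costs a page but makes the lemma independent of the representation-theoretic characterization, which is a reasonable trade.
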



\begin{remark}\label{zam2}\label{ovlozhenii}
Take $\lambda'\in \Lambda_+$ and assume that $M(\lambda')$ is not hereditarily normal. By Lemma~\ref{l.ovl}, for all $\lambda \in \Lambda_+$ such 
that~$\lambda \succeq \lambda'$ the set $M(\lambda)$ is not hereditarily normal. 
\end{remark}

Let $v_0, v_1, \dots, v_r$ be vectors in a rational vector space $\QQ^n$, and let $f$ be a linear function on~$\QQ^n$. We call $f$ a {\it discriminating linear function} for the collection 
$\{v_0; v_1, \dots, v_r\}$ if the value $f(v_0)$ cannot be represented as a linear combination of the values $f(v_1),\ldots,f(v_r)$ with nonnegative integer coefficients. 
Assume that $v_0\in \ZZ (v_1, v_2, \dots, v_r) \cap \QgO (v_1, v_2, \dots, v_r)$, and that $v_0$ can be represented as a
$\QgO$-combination of linearly independent vectors $v_1, \dots, v_r$ with coefficients from the interval~$[0,1)$. Then the existence of a discriminating function guarantees 
that $\{v_0; v_1, \dots, v_r\}$ is an~ENSS.


\section{The root system $E_8$}\label{sec:e8}
Consider~$\QQ^8$ with the standard scalar product. The set of vectors
$$
\pm \vep_i \pm \vep_j \quad (1\leq i<j\leq 8)\qquad\mbox{and}\qquad \frac 12 \sum_{i=1}^8(-1)^{\nu_i}\vep_i ,\quad \mbox{where } \sum_{i=1}^8 \nu_i \mbox{ is even,}
$$
form the root system~$E_8$, see~\cite[Ch. 6, \S 4]{Bour2}.
The weight lattice~$\Lambda$ coincides with the root lattice~$\Xi$. It has the following structure:
$$
  \Lambda=\Lambda_0\cup \Lambda_1, \;
  \Lambda_0=\left\{v\in \ZZ^8: \quad \sum_{i=1}^8 v_i \mbox{ is even}\right\}, \;
  \Lambda_1=\Lambda_0+\frac12(1,1,1,1,1,1,1,1).
$$
The set $\Lambda_1$ consists of all vectors which have strictly half-integer coordinates with the even sum. We show below that 
none of the sets~$M(\lambda)$ for an arbitrary weight $\lambda\in \Lambda\setminus\{0\}$ (not only for the dominant weight, see Remark~\ref{notdomin}) is hereditarily normal. This implies that for every irreducible representation there exists a non-normal $T$-orbit closure.

Note that the Weyl group~$W$ of the root system~$E_8$ contains, in particular, all the permutations of coordinates, since they are generated by the 
reflections $s_{\alpha}$ for the vectors $\alpha$ of the form $\vep_i-\vep_j$. The group $W$ also contains sign changes in the even number 
of coordinates.
E.g. the sign change in the first and the second coordinates 
can be obtained as the superposition of reflections $s_{\alpha_1}s_{\alpha_2}$, where
$\alpha_{1,2}=\vep_1\pm\vep_2$.

Choose a weight $\lambda^\circ=\vep_1+\vep_2$. Firstly we show that $M(\lambda^\circ)$
contains an ENSS. 

\prr{$\lambda=\lambda^\circ
=\vep_1+\vep_2$.}
Consider the following vectors of~$M(\lambda^\circ)$:
\begin{gather*}
\qmatrix{v_1\\v_2\\v_3\\v_4\\v_5\\v_6\\v_7\\v_8}=\qmatrix{
1&  0&  1&  0&  0&  0& 0& 0\\
1&  0&  0&  1&  0&  0& 0& 0\\
0&  1&  0&  0&  1&  0& 0& 0\\
0&  1&  0&  0&  0&  1& 0& 0\\
0&  0& -1& -1&  0&  0& 0& 0\\
0&  0&  0&  0& -1& -1& 0& 0\\
1&  0&  0&  0&  0&  0& 1& 0\\
0& -1&  0&  0&  0&  0& 1& 0},
\end{gather*}
and the vector $v_0=(1,1,0,0,0,0,0,0)=\frac 12 (v_1+v_2+v_3+v_4+v_5+v_6)=v_7-v_8$.
Suppose that $v_0$ is represented as a $\ZgO$-combination of the other vectors.
Consider the seventh coordinate. It is clear that the coefficients at $v_7$ and $v_8$
are zero. Since the vectors $v_1,\ldots,v_6$ are linearly independent, $v_0$ has the unique decomposition as a linear combination of these six vectors. 
The coefficients of this decomposition
are non-integers. Hence the set $\{v_0;v_1,\ldots,v_8\}$ is indeed an ENSS.
\medskip

Next we show that for each nonzero weight $\lambda$, the weight $\lambda^\circ$ is contained in $M(\lambda)$.
Consider the cases $\lambda\in\Lambda_0$ and $\lambda\in\Lambda_1$ separately.

\begin{lemma}\label{ce}
If $\lambda=(a_1,\ldots,a_8)\in \Lambda_0\setminus \{0\}$, then $\lambda^\circ\in
M(\lambda)$.
\end{lemma}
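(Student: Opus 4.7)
The goal is to verify $\lambda^\circ = \vep_1 + \vep_2 \in M(\lambda) = (\lambda + \Xi) \cap P(\lambda)$. For the lattice membership there is nothing to prove: for $E_8$ one has $\Xi = \Lambda$, and since both $\lambda$ and $\vep_1 + \vep_2$ lie in $\Lambda_0 \subset \Lambda = \Xi$, the difference $\lambda - (\vep_1+\vep_2)$ is in $\Xi$ automatically. The heart of the argument is therefore to show $\vep_1 + \vep_2 \in P(\lambda)$.

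My plan is to work inside the subgroup $W' \subset W(E_8)$ generated by coordinate permutations and paired sign changes (this is the Weyl group of $D_8$, and it preserves $\Lambda_0$). Since $P(\lambda)$ depends only on the $W$-orbit, I first replace $\lambda$ by a $W'$-conjugate so that $a_1 \geq a_2 \geq \dots \geq a_7 \geq |a_8| \geq 0$, and then build $\vep_1+\vep_2$ as an explicit convex combination of $W'$-conjugates of $\lambda$.

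The decisive step is an averaging trick. Let $H \subset W'$ be the subgroup of paired sign changes supported on the coordinates $3, \ldots, 8$. Any basis vector $\vep_i$ with $i \geq 3$ is sent to $-\vep_i$ by pairing its sign change with that of some $\vep_j$, $j \neq i$, so the $H$-invariant subspace of $\mathrm{span}(\vep_3, \ldots, \vep_8)$ is trivial, and
$$
\frac{1}{|H|}\sum_{h \in H} h\lambda = a_1\vep_1 + a_2\vep_2.
$$
Thus $a_1 \vep_1 + a_2 \vep_2 \in P(\lambda)$, and by the transposition $(1\,2)$ also $a_2 \vep_1 + a_1 \vep_2 \in P(\lambda)$. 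Their midpoint is $\frac{a_1+a_2}{2}(\vep_1+\vep_2) \in P(\lambda)$. Since flipping all eight signs is a paired sign change, $-\lambda \in W' \cdot \lambda$ and hence $0 \in P(\lambda)$; averaging $\frac{a_1+a_2}{2}(\vep_1+\vep_2)$ with $0$ produces $\vep_1+\vep_2$ itself, provided $a_1+a_2 \geq 2$.

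It remains to check $a_1+a_2 \geq 2$ when $\lambda \neq 0$. If $a_2 \geq 1$ this is immediate. Otherwise $a_2 = 0$, and the dominance ordering forces $a_3 = \ldots = a_7 = a_8 = 0$, so $\lambda = a_1 \vep_1$; the parity condition defining $\Lambda_0$ then makes $a_1$ even, and $\lambda \neq 0$ gives $a_1 \geq 2$. The only genuinely nontrivial ingredient is the vanishing of the $H$-invariants on $\mathrm{span}(\vep_3,\ldots,\vep_8)$, which is a one-line check; everything else is routine convex-combination bookkeeping.
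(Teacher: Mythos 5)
Your proof is correct, and while it shares the overall trajectory of the paper's argument --- both funnel $\lambda$ to the point $\frac{a_1+a_2}{2}(\vep_1+\vep_2)$ and then slide along the line through the origin to reach $\vep_1+\vep_2$ --- it organizes the bookkeeping genuinely differently. The paper keeps every intermediate point inside $M(\lambda)$, i.e.\ inside the coset $\lambda+\Xi$: it first selects two coordinates $a_i,a_j$ \emph{of the same parity} with $|a_i|+|a_j|\geqslant 2$ (a pigeonhole step it dismisses as obvious, though it needs a short argument), precisely so that the midpoint $\lambda''=(a,a,0,\ldots,0)$ of $\lambda$ and its Weyl conjugate $(a_2,a_1,-a_3,\ldots,-a_8)$ has \emph{integer} $a$ and hence lies in $\lambda+\Xi$, and only then passes to $\lambda^\circ\in[\lambda'',-\lambda'']$. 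You instead observe at the outset that lattice membership is free (since $\Xi=\Lambda$ for $E_8$ and $\Lambda_0$ is a subgroup), reducing everything to $\vep_1+\vep_2\in P(\lambda)$, where convexity alone suffices: your $H$-averaging (correctly identified as the projection onto the $H$-fixed subspace $\mathrm{span}(\vep_1,\vep_2)$) and the subsequent symmetrization may well produce a non-lattice point such as $\frac32(\vep_1+\vep_2)$ --- which the paper's stepwise-in-$M(\lambda)$ route could not use --- and this is harmless in your setup. The parity hypothesis defining $\Lambda_0$ then resurfaces only in your degenerate case $\lambda=a_1\vep_1$, to force $a_1\geqslant 2$. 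Net effect: you eliminate the same-parity pair selection and all intermediate integrality checks, at the cost of a slightly heavier (but routine) averaging argument; note also that the paper's midpoint is itself just your averaging trick over a two-element set, so the two proofs are the same in spirit but yours is the more streamlined in its handling of the lattice.
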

\begin{proof}
Obviously, there exist two coordinates of the same parity $a_i$, $a_j$ such that
${|a_i|+|a_j|\geqslant 2}$. Without loss of generality, we may suppose that these coordinates are $a_1$ and $a_2$. Acting with the Weyl group, we can obtain
$a_1\geqslant 0$, $a_2\geqslant 0$. This gives
$a_1+a_2\geqslant 2$. Let $a=\frac{a_1+a_2}2$. Obviously, $a\geq 1$ and $a$ is an integer. Next, the set $M(\lambda)$ contains the point
$\lambda'=(a_2,a_1,-a_3,-a_4,-a_5,-a_6,-a_7,-a_8)$. Also it contains the point 
$$
\lambda''=(a,a,0,\ldots,0)
$$ 
because $\lambda''$ is the midpoint of the segment
$[\lambda, \lambda']$ and $\lambda-\lambda''\in
\Xi$. 
Obviously, the point $\lambda^\circ$ belongs to the segment $[\lambda'', -\lambda'']$, moreover, $\lambda^\circ-\lambda''\in \Xi$. Finally we obtain that $\lambda^\circ\in M(\lambda'')\subseteq M(\lambda)$.
\end{proof}

\begin{lemma}\label{nce}
If $\lambda=(a_1,\ldots,a_8)\in \Lambda_1$, then $\lambda^\circ\in M(\lambda)$.
\end{lemma}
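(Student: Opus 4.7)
The plan is to reduce Lemma~\ref{nce} to Lemma~\ref{ce} by producing an element $\mu\in W\lambda$ that lies in $\Lambda_0\setminus\{0\}$. Since $\mu$ and $\lambda$ belong to the same Weyl orbit, one has $M(\mu)=M(\lambda)$ (from the description $M(\lambda)=(\lambda+\Xi)\cap P(\lambda)$, which depends only on the $W$-orbit of~$\lambda$). Lemma~\ref{ce} applied to $\mu$ then delivers $\lambda^\circ\in M(\mu)=M(\lambda)$, as required.

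To construct $\mu$, I would reflect $\lambda$ in a suitably chosen half-integer root. For a sign vector $\sigma=(\sigma_1,\dots,\sigma_8)\in\{\pm 1\}^8$ whose number $p$ of $-1$ entries is even, put $\alpha=\tfrac12(\sigma_1,\dots,\sigma_8)\in\Phi$. Then $(\alpha,\alpha)=2$ and $c:=\tfrac{2(\alpha,\lambda)}{(\alpha,\alpha)}=\tfrac12\sum_i\sigma_i a_i$. Writing $a_i=k_i+\tfrac12$ with $k_i\in\ZZ$ and using that $\sum a_i$ and $p$ are both even, a direct computation shows $\sum\sigma_i a_i$ is itself an even integer, so $c\in\ZZ$. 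The image $\mu:=s_\alpha(\lambda)=\lambda-c\alpha$ has $i$-th coordinate $a_i-c\sigma_i/2$; provided $c$ is \emph{odd}, each $\mu_i$ is an integer and $\sum\mu_i=\sum a_i-c(4-p)$ is even (since $4-p$ is even). Hence $\mu\in\Lambda_0$, and $\mu\ne 0$ because $s_\alpha$ is invertible while $\lambda\ne 0$.

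The crux, therefore, is the existence of such $\sigma$ with $p$ even and $c$ odd, equivalently with $N(\sigma):=\sum_i\sigma_ia_i\equiv 2\pmod 4$. First try $\sigma^{(0)}=(1,\dots,1)$: here $p=0$ and $N(\sigma^{(0)})=S:=\sum a_i$, so if $S\equiv 2\pmod 4$ we are done. Otherwise $S\equiv 0\pmod 4$, and I take $\sigma^{(i,j)}$ obtained from $\sigma^{(0)}$ by flipping $\sigma_i$ and $\sigma_j$ to $-1$ (so $p=2$), giving $N(\sigma^{(i,j)})=S-2(a_i+a_j)$. Since $a_i+a_j=k_i+k_j+1$ is odd exactly when $k_i\equiv k_j\pmod 2$, it suffices to find an index pair with matching parity of $k$-value; pigeonhole on the eight integers $k_1,\dots,k_8$ in two parity classes guarantees at least one such pair, and then $N(\sigma^{(i,j)})\equiv-2\equiv 2\pmod 4$.

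The main obstacle is the mod-$4$ bookkeeping above; the trickiest case is $\lambda=(\tfrac12)^8$, where $S=4\equiv 0\pmod 4$ and all $k_i$ vanish, but then any pair $(i,j)$ works and the construction produces, e.g., $\mu=(0,\dots,0,1,1)\in\Lambda_0$, to which Lemma~\ref{ce} then applies.
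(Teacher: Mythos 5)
Your proof is correct and takes essentially the same route as the paper: reflect $\lambda$ in a half-integer root to land on a nonzero vector of $\Lambda_0$, then invoke Lemma~\ref{ce}, with the case split on $S\bmod 4$ and the pigeonhole on parities being exactly the paper's split on $\sum b_i \bmod 8$ and its choice of $b_i\equiv b_j\pmod 4$ (since $b_i=2k_i+1$, your condition $k_i\equiv k_j\pmod 2$ is the same condition). The only cosmetic difference is that the paper first applies a sign change $\tau$ in two coordinates and then always reflects in the fixed root $\alpha_0=\frac12(1,\dots,1)$, whereas you absorb $\tau$ into the root itself via $s_{\tau\alpha_0}=\tau s_{\alpha_0}\tau$.
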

\begin{proof}
Take the root $\alpha_0=\left(\frac 12 ,\ldots,\frac 12\right)$. Consider the reflection~$s_{\alpha_0}\in W$. Let ${a_i=b_i/2}$. Then
$b_i\in \ZZ$, the values $b_i$ are odd and such that $\sum_{i=1}^8 b_i$ is divisible by~4. Thus 
$$
s_{\alpha_0}(\lambda)=\lambda-\frac{a_1+\ldots+a_8}4(1,1,\ldots,1)=\lambda-\frac{b_1+\ldots+b_8}8(1,1,\ldots,1).
$$
If $\sum_{i=1}^8b_i$ is not divisible by~8, then the vector
$s_{\alpha_0}(\lambda)$ is nonzero and has integer coordinates. Hence
$M(\lambda)=M(s_{\alpha_0}(\lambda))\ni \lambda^\circ$ due to Lemma~\ref{ce}. 
If $\sum_{i=1}^8b_i$ is divisible by~8, choose two coordinates having the same residues modulo~4 among~$b_i$.
We may assume that these coordinates are~$b_1$ and~$b_2$. Changing their signs, we obtain the vector $\lambda'\in M(\lambda)$
such that its doubled sum of coordinates is not divisible by~8. 
Hence the vector $s_{\alpha_0}(\lambda')$ is nonzero and has integer coordinates. With the same reasoning as above we obtain
$M(\lambda)=M(s_{\alpha_0}\lambda')\ni \lambda^\circ$.
\end{proof}

We have shown that for~$E_8$ and for all~$\lambda\in \Lambda\setminus\{0\}$ the set $M(\lambda)$
contains a non-saturated subset. Therefore, any rational $E_8$-module contains a $T$-orbit with non-normal closure.

\section{The root system $E_7$}
For the root system~$E_7$, we use a realization which is slightly different from the one in~\cite[Ch.~6, \S 4]{Bour2}. Namely, we change the signs of the first and the last coordinates.
Now the set of roots belongs to the 7-dimensional subspace~$L\subset \RR^8$, 
orthogonal to the vector $\vep_7-\vep_8$. It contains the following vectors:
\begin{align*}
  &\pm \vep_i \pm \vep_j \quad (1\leq i<j\leq 6); \qquad
  \pm (\vep_7+\vep_8); \\
  &\pm \frac 12 (\vep_7+\vep_8+\sum_{i=1}^6 (-1)^{\nu_i}\vep_i), \quad \mbox{where } \sum_{i=1}^6 \nu_i \mbox{ is even.}
\end{align*}

The root lattice~$\Xi$ is the intersection of the root lattice of~$E_8$
with~$L$, namely, $\Xi=\Xi_0\cup \Xi_1$, where
$$
\Xi_0=\left\{\ell\in \ZZ^8: \quad \sum_{i=1}^6 \ell_i \mbox{ even}, \; \ell_7=\ell_8\right\}, \quad
  \Xi_1=\Xi_0+\frac12(1,1,1,1,1,1,1,1),
$$
and $\Xi_1$ again consists of all vectors in~$L$ having strictly half-integer coordinates with even sum.

The weight lattice~$\Lambda$ is a superlattice of $\Xi$ of index~2. Namely,
$$
  \Lambda=\Xi\cup\left(\Xi+\frac12(1,1,1,-1,-1,-1,0,0)\right).
$$
It consists of all vectors $\ell\in L$ such that all their coordinates are either integers or half-integers, their sum is even, and for all
$1\leq i<j\leq 6$ the value $(\ell_i-\ell_j)$ is integer.

The Weyl group~$W$ contains, in particular, all the permutations of the first six coordinates. It also contains the sign changes in the even
number of coordinates, where the two last coordinates are either both present or both absent.
The Weyl group $W$ acts on~$\Phi$ transitively.

We need analogs of Lemmas~\ref{ce} and~\ref{nce} from Section~\ref{sec:e8}. The following lemmas will also be useful in the construction of examples.

\begin{lemma}\label{ce7}
If $\lambda=(a_1,\ldots,a_8)\in \Xi\setminus \{0\}$, then $M(\lambda)$
contains all the roots.
\end{lemma}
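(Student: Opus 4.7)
The plan is to reduce the lemma to locating a single root inside the weight polytope $P(\lambda)$. Indeed, since the Weyl group $W$ of $E_7$ acts transitively on the simply-laced root system $\Phi$ and $M(\lambda)$ is $W$-invariant, finding one root in $M(\lambda)$ forces all of $\Phi\subset M(\lambda)$. Because $\lambda\in\Xi$, one has $M(\lambda)=(\lambda+\Xi)\cap P(\lambda)=\Xi\cap P(\lambda)$, and since $\Phi\subset\Xi$, only the inclusion $\alpha\in P(\lambda)$ for some $\alpha\in\Phi$ remains to be verified. I would use that $-\mathrm{id}\in W$ for $E_7$, so $P(\lambda)$ is centrally symmetric, $0\in P(\lambda)$, and consequently the whole segment $[0,\mu]$ lies in $P(\lambda)$ for every $\mu\in P(\lambda)$. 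Thus it will be enough to find a positive-integer multiple $a\alpha$ of some root $\alpha$ inside $P(\lambda)$.

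First, for $\lambda\in\Xi_0$ (integer entries) I would follow the pattern of Lemma~\ref{ce}. A short parity analysis shows that, if the first six coordinates are not all zero, then after permuting the first six coordinates and flipping pairs of signs among them (all admissible in $W$), one may assume $a_1,a_2\geqslant 0$ share a parity with $a_1+a_2\geqslant 2$. Then the composition of $s_{\vep_1-\vep_2}$ with the simultaneous sign-flip of $\vep_3,\vep_4,\vep_5,\vep_6,\vep_7,\vep_8$ (six sign changes with both of $\vep_7,\vep_8$ flipped, hence a legitimate element of $W$) sends $\lambda$ to $\lambda'=(a_2,a_1,-a_3,\ldots,-a_8)\in W\lambda$, and the midpoint $\tfrac12(\lambda+\lambda')=a(\vep_1+\vep_2)$ with integer $a=\tfrac{a_1+a_2}{2}\geqslant 1$ lies in $P(\lambda)$. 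By the segment remark, $\vep_1+\vep_2\in P(\lambda)$. The degenerate configuration $a_1=\cdots=a_6=0$, that is $\lambda=(0,\ldots,0,c,c)$ with $c\ne 0$, is handled by observing that $s_{\vep_7+\vep_8}(\lambda)=-\lambda$, so the segment $[-\lambda,\lambda]\subset P(\lambda)$ contains the root $\vep_7+\vep_8$.

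Next, for $\lambda\in\Xi_1$ I would follow Lemma~\ref{nce}. Writing $\lambda=\tfrac12(b_1,\ldots,b_8)$ with odd $b_i$, $b_7=b_8$, and $\sum b_i\equiv 0\pmod 4$, apply $s_{\beta_0}$ with $\beta_0=\tfrac12(\vep_1+\cdots+\vep_8)\in\Phi$; this acts as $\lambda\mapsto\lambda-\tfrac{\sum b_i}{8}(1,\ldots,1)$. When $\sum b_i\equiv 4\pmod 8$, the image $s_{\beta_0}(\lambda)$ is a nonzero element of $\Xi_0$ (nonzero because its vanishing would force all $b_i$ equal, contradicting $\sum b_i\equiv 4\pmod 8$), and the previous case applies. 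When $\sum b_i\equiv 0\pmod 8$, a pigeonhole argument on residues modulo~$4$ among $b_1,\ldots,b_6$ yields two indices $i\ne j$ in $\{1,\ldots,6\}$ with $b_i\equiv b_j\pmod 4$; flipping the signs of $b_i$ and $b_j$ is an allowed Weyl move (two sign changes restricted to the first six coordinates) which shifts $\sum b_i$ by $4\pmod 8$, reducing to the previous sub-case.

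The hard part will be the combinatorial bookkeeping imposed by the restricted Weyl group of $E_7$ compared to that of $E_8$: the last two coordinates $\vep_7,\vep_8$ can only be flipped together, and the total number of sign flips must remain even. This is exactly what forces the specific compositions of reflections above, the separate treatment of the degenerate $\Xi_0$ case with vanishing first six coordinates, and the divisibility analysis modulo~$8$ in the $\Xi_1$ case.
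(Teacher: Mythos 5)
Your proposal is correct and follows essentially the same route as the paper: the same reduction via transitivity of $W$ on roots, the same split into $\lambda\in\Xi_0$ (with the parity argument of Lemma~\ref{ce} and the separate degenerate case $\lambda=c(\vep_7+\vep_8)$) and $\lambda\in\Xi_1$ (with $s_{\alpha_0}$ and the pigeonhole on residues modulo~$4$ as in Lemma~\ref{nce}). You merely spell out some details the paper leaves implicit, such as the central symmetry of $P(\lambda)$, the identity $M(\lambda)=\Xi\cap P(\lambda)$, and the nonvanishing of $s_{\alpha_0}(\lambda)$.
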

\begin{proof}
Consider first the case $\lambda\in \Xi_0$. We claim that $M(\lambda)$ contains either the vector
$\vep_1+\vep_2$ or the vector $\vep_7+\vep_8$. Then by the transitivity of the Weyl group it contains actually all the roots. 
To prove the claim, we note that if $\sum_{i=1}^6 |a_6|\neq 0$, then this sum is even. Choose two numbers of the same parity among $a_1,\dots,a_6$ such that the sum of their absolute values 
is not less than two. Now proceed as in the proof of Lemma~\ref{ce}. We obtain that $\vep_1+\vep_2\in M(\lambda)$. 
Otherwise, if $\sum_{i=1}^6 |a_6| = 0$, we have
 $\lambda=a_7(\vep_7+\vep_8)$, and the point $\vep_7+\vep_8$ belongs to the interval $[-\lambda'',\lambda'']$. Hence it also
 belongs to~$M(\lambda)$.

Now consider the case $\lambda\in \Xi_1$. Write down
$\lambda=\frac12(b_1,\ldots,b_6,b_7,b_7)$, where all the~$b_i$s are odd. If $B=\sum_{i=1}^6 b_i +2b_7$ is not divisible by~8, then~$s_{\alpha_0}(\lambda)\in \Xi_0$, 
and this case follows from the case considered above. If~$B$ is divisible by~8, then we can choose two numbers among $b_1,\dots,b_6$ having the same residues modulo~4. As in the proof of Lemma~\ref{nce}, this allows to reduce the latter case to the preceding one.
\end{proof}

\begin{lemma}\label{nce7}
Let $\lambda=(a_1,\ldots,a_8)\in \Lambda\setminus\Xi$. Then
$M(\lambda)$ contains the vector
$$
\lambda_*=(1,0,0,0,0,0,1/2,1/2).
$$
\end{lemma}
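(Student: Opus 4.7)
The plan is to verify separately that $\lambda_*\in\lambda+\Xi$ and $\lambda_*\in P(\lambda)$; combined with the formula $M(\lambda)=(\lambda+\Xi)\cap P(\lambda)$ this gives the claim.

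The coset decomposition $\Lambda\setminus\Xi=\Xi+\tfrac12(1,1,1,-1,-1,-1,0,0)$ splits $\Lambda\setminus\Xi$ into Type~A (first six coordinates strictly half-integer and $a_7=a_8\in\ZZ$) and Type~B (first six in $\ZZ$ and $a_7=a_8$ strictly half-integer); note that $\lambda_*$ itself lies in Type~B. A direct parity check using the sum-even condition for vectors of $\Lambda$ shows $\lambda-\lambda_*\in\Xi$ in either case, disposing of the lattice step. For the convexity step, I would first reduce Type~A to Type~B by applying $s_{\alpha_0}$ with $\alpha_0=\tfrac12(1,\dots,1)$, exactly as in Lemma~\ref{nce}: writing $\lambda=\tfrac12(b_1,\dots,b_6,2d,2d)$, the condition $\lambda\in\Lambda$ forces $B:=\sum b_i\equiv 0\pmod 4$, and $s_{\alpha_0}(\lambda)$ is Type~B precisely when $B+4d\equiv 4\pmod 8$; pigeonholing the six odd $b_i$ into residue classes modulo~$4$ produces two of equal residue whose pair sign-flip shifts $B\pmod 8$ by $4$, so the required congruence is always achievable by an element of $W$.

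For $\lambda=(c_1,\dots,c_6,c/2,c/2)$ of Type~B, the condition $\lambda\in\Lambda$ forces $\sum c_i$ to be odd; after a $W$-permutation and pair sign-flips (using zero coordinates as partners when needed) I may assume $c_1,c>0$ with $c_1$ odd. Let $H\subset W$ be the subgroup generated by the pair sign-flips inside $\{2,3,4,5,6\}$. Among the sixteen even subsets of $\{2,3,4,5,6\}$ exactly half contain any given index, so the centroid of the $H$-orbit of $\lambda$ equals $\mu:=(c_1,0,0,0,0,0,c/2,c/2)$. As a convex combination of Weyl-orbit vertices, $\mu\in P(\lambda)$; moreover $\mu-\lambda\in\Xi_0\subseteq\Xi$ since $\sum c_i-c_1$ is even (odd minus odd). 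Hence $\mu\in M(\lambda)$.

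Finally, I would write $\lambda_*$ as an explicit convex combination of the four sign-variants $(\varepsilon_1 c_1,0,\dots,0,\varepsilon_2 c/2,\varepsilon_2 c/2)$ with $\varepsilon_1,\varepsilon_2\in\{\pm 1\}$ (trailing signs agreeing), all $W$-equivalent to $\mu$ via pair sign-flips $\{1,j\}$ with $\vep_j$-coordinate of $\mu$ equal to zero and via $\{7,8\}$. The resulting linear system for the coefficients has a one-parameter family of solutions, which contains a nonnegative one whenever $c_1,c\geq 1$; this places $\lambda_*\in P(\mu)\subseteq P(\lambda)$, and together with the lattice check this yields $\lambda_*\in M(\lambda)$. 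The main obstacle is the modular bookkeeping in the Type~A to Type~B reduction; the remaining steps are a transparent Weyl-averaging computation.
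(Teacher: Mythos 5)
Your proposal is correct and follows essentially the same route as the paper: the Type~A case is reduced to Type~B via $s_{\alpha_0}$ with the same mod-$4$ pair sign-flip pigeonhole used in Lemma~\ref{nce}, and the Type~B case is handled by averaging over even sign changes in coordinates $2,\dots,6$ to reach $(c_1,0,\dots,0,c/2,c/2)$, exactly as in the paper's proof. The only (cosmetic) differences are that you verify the lattice condition $\lambda_*-\lambda\in\Xi$ once at the outset rather than step by step, and you replace the paper's two successive segment arguments (first shrinking $b_7/2$ to $1/2$, then $a_1$ to $1$) by a single convex combination over the four sign variants, which is equally valid since $(1,1/2)$ lies in the rectangle $[-c_1,c_1]\times[-c/2,c/2]$.
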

\begin{proof}
Since $\lambda\notin\Xi$, either the six first coordinates of $\lambda$ are integers and two last are half-integers, or vice versa. 
If the last coordinates are integers,
we can apply the reflection $s_{\alpha_0}$, after a sign change in some two of the first six coordinates, if needed, in an order to obtain the vector such that its first six coordinates are integers, and the two last are half-integers. 
Now we may assume that two last coordinates are half-integers, i.e. $\lambda=(a_1,a_2,\ldots,a_6,b_7/2,b_7/2)$ for some integers
$a_1,\dots,a_6$ and an odd integer~$b_7$.

Consider the vector $\lambda'=(a_1,a_2,\ldots,a_6,-b_7/2,-b_7/2)\in M(\lambda)$.
Since $b_7$ is odd, the set $M(\lambda)$ contains the vector
$\lambda''=(a_1,a_2,\ldots,a_6,1/2,1/2)\in
[\lambda,\lambda']$. Notice that $a_1+\ldots+a_6$ is odd. Consider an odd coordinate among~$a_i$; without loss of generality we may assume that this is~$a_1$. Consider all the points which can 
be obtained from~$\lambda''$ by the sign change in an even number of coordinates from the second till the sixth. Their barycenter will be the point
$(a_1,0,\ldots,0,1/2,1/2)\in M(\lambda)$.

Finally, the point  $(1,0,\ldots,0,1/2,1/2)$ belongs to the segment joining the points 
$$(a_1,0,\ldots,0,1/2,1/2)\mbox{ and }(-a_1,0,\ldots,0,1/2,1/2).
$$ 
It follows that
$M(\lambda)$ contains the point  $\lambda_*\in M(\lambda)$.
\end{proof}

Now we are ready to present the NSSs for all the dominant weights.

\prr{The weight $\lambda\in \Xi$.}\label{e7l1}
By Lemma~\ref{ce7}, the set~$M(\lambda)$ contains all the roots. Consider the following subset of~$M(\lambda)$:
\begin{gather*}
\qmatrix{v_1\\v_2\\v_3\\v_4\\v_5\\v_6}=\qmatrix{
1&  0&  0& -1&  0&  0& 0& 0\\
0&  1&  0&  0& -1&  0& 0& 0\\
0&  0&  1&  0&  0&  1& 0& 0\\
0&  0&  0&  0&  0&  0& 1& 1\\
0&  0&  0&  1&  1&  0& 0& 0\\
1/2&  1/2&  1/2&  1/2& 1/2& 1/2& 1/2& 1/2},\\
v_0=\frac 12 (1,1,1,-1,-1,1,1,1)=\frac 12 (v_1+v_2+v_3+v_4)=v_6-v_5.
\end{gather*}
Suppose that $v_0$ is a $\ZgO$-combination of vectors $v_1,\dots,v_6$. The first three coordinates of~$v_0$ being smaller than one,
the coefficients at $v_1$, $v_2$, $v_3$ must be zero. However, in this case the fourth coordinate of this linear combination will inevitably be nonnegative, which gives a contradiction. This means that $\{v_0;v_1,\dots,v_6\}$ is indeed an ENSS.

\prr{The weight $\lambda\in\Lambda\setminus\Xi$.}\label{e7l2}
By Lemma~\ref{nce7}, the set~$M(\lambda)$ contains the vector
$$
\lambda_*=(1,0,0,0,0,0,1/2,1/2),
$$ 
hence it also contains the vector~$-s_{\alpha_0}\lambda_*=\frac12(-1,1,1,1,1,1,0,0)$. Now we can give the NSS.
$$
\qmatrix{v_1\\v_2\\v_3\\v_4\\v_5\\v_6\\
\noalign{\medskip}
v_7\\v_8\\v_9\\v_{10}\\v_{11}\\v_{12}}=\qmatrix{
-1/2 & 1/2 & 1/2 & 1/2 & 1/2 & 1/2 & 0 & 0\\
1/2 & -1/2 & 1/2 & 1/2 & 1/2 & 1/2 & 0 & 0\\
1/2 & 1/2 & -1/2 & 1/2 & 1/2 & 1/2 & 0 & 0\\
1/2 & 1/2 & 1/2 & -1/2 & 1/2 & 1/2 & 0 & 0\\
1/2 & 1/2 & 1/2 & 1/2 & -1/2 & 1/2 & 0 & 0\\
1/2 & 1/2 & 1/2 & 1/2 & 1/2 & -1/2 & 0 & 0\\
\noalign{\medskip}
0 & 0 & 0 & 1 & 0 & 0 & 1/2 & 1/2 \\
0 & 0 & 0 & 0 & 1 & 0 & 1/2 & 1/2 \\
0 & 0 & 0 & 0 & 0 & 1 & 1/2 & 1/2 \\
0 & 0 & -1 & 0 & 0 & 0 & 1/2 & 1/2 \\
0 & -1 & 0 & 0 & 0 & 0 & 1/2 & 1/2 \\
-1 & 0 & 0 & 0 & 0 & 0 & 1/2 & 1/2 \\
}.
$$
We have:
\begin{multline*}
  v_0=(1,1,1,1,1,1,0,0)
  =\frac12 (v_1+v_2+v_3+v_4+v_5+v_6)\\=v_7+v_8+v_9-v_{10}-v_{11}-v_{12}.
\end{multline*}
If we consider the 7th coordinate, we see that the $\ZgO$-combination can contain only the first 6 vectors with nonzero coefficients. Since they are
linearly independent, we cannot obtain yet another decomposition of $v_0$ in these vectors.

Thus we have constructed an NSS for each nonzero $\lambda\in \Lambda$.

\section{The root system $E_6$}
Instead of the standard realization, we will use a different realization of~$E_6$ in the 6-dimensional subspace of the 9-dimensional space because it is more symmetric. 
The coordinates of each vector are split into three triples of consecutive coordinates, and the sum in each triple equals zero.
It is easy to check that the set given below is indeed a root system. Moreover, the Gram matrix for its simple roots coincides with the Gram matrix for the
simple roots of the root system $E_6$ in its standard realization. Hence it is indeed~$E_6$.

The roots are: 18 vectors of the form $\vep_i-\vep_j$, where distinct indices $i$, $j$ belong to one of the triples 
$\{1,2,3\}$, $\{4,5,6\}$, $\{7,8,9\}$; 
27 vectors of the form $(\bar a;\; \bar b ;\; \bar c)$, and 27 vectors of the form $(-\bar a;\; -\bar b ;\; -\bar c)$, where 
$$
\bar a, \bar b, \bar c
  \in\left\{ \left(\frac 23, -\frac 13, -\frac 13 \right), \left(-\frac 13, \frac 23, -\frac 13 \right), \left(-\frac 13, -\frac 13, \frac 23\right) \right\}.
 $$\\
 
The simple roots are
\begin{gather*}
\alpha_1=(0,0,0;0,0,0;0,1,-1),\quad
\alpha_2=(0,1,-1;0,0,0;0,0,0),\\
\alpha_3=(0,0,0;0,0,0;1,-1,0),\quad
\alpha_4= \left( \frac 13, -\frac 23, \frac 13; -\frac 23, \frac 13, \frac 13; -\frac 23, \frac 13, \frac 13  \right),\\
\alpha_5=(0,0,0;1,-1,0;0,0,0),\quad
\alpha_6=(0,0,0;0,1,-1;0,0,0).
\end{gather*}
The corresponding fundamental weights are
\begin{gather*}
\pi_1= \left( \frac 23, -\frac 13, -\frac 13; 0,0,0; \frac 13, \frac 13, -\frac 23  \right),\quad
\pi_2=(1,0,-1;0,0,0;0,0,0),\\
\pi_3=\left(\frac 43, -\frac 23, -\frac 23;0,0,0;\frac 23, -\frac 13, -\frac 13\right),\quad
\pi_4= (2,-1,-1;0,0,0;0,0,0),\\
\pi_5=\left(\frac 43, -\frac 23, -\frac 23;\frac 23, -\frac 13, -\frac 13; 0,0,0\right),\quad
\pi_6=\left( \frac 23, -\frac 13, -\frac 13;  \frac 13, \frac 13, -\frac 23 ; 0,0,0 \right).\\
\end{gather*}
The Weyl group $W$ of the root system $E_6$ contains all the permutations of coordinates in triples. Moreover,
$W$ acts on the set of roots transitively.

Let us show that for all $\lambda\in \Lambda\setminus \{0\}$ the set $M(\lambda)$ is not hereditarily normal.
By Lemma~\ref{l.ovl} and Remark~\ref{zam2} it is enough to show that already in the fundamental representations not all $T$-orbit closures are normal, 
i.e., that all the sets~$M(\pi_i)$ are not hereditarily normal.

\prr \label{prr4} The cases $\lambda=\pi_1$ and $\lambda=\pi_6$ are similar, so 
we consider just the case $\lambda=\pi_1$.
Take the root $\beta_0=\frac13(1,-2,1;-2,1,1;1,1,-2)$. Then $M(\lambda)$
contains the vector
$$
 s_{\beta_0}(\lambda)=\frac13(1,1,-2;2,-1,-1;0,0,0).
$$
Hence, the set~$M(\lambda)$ contains the set
\begin{gather*}
\qmatrix{v_1\\v_2\\v_3\\v_4\\v_5\\v_6\\
\noalign{\medskip}
v_7}=\frac13\qmatrix{2&-1&-1&0&0&0&1&1&-2\\2&-1&-1&0&0&0&1&-2&1\\2&-1&-1&0&0&0&-2&1&1\\
1&1&-2&2&-1&-1&0&0&0\\1&1&-2&-1&2&-1&0&0&0\\1&1&-2&-1&-1&2&0&0&0\\
\noalign{\medskip}
-1&-1&2&0&0&0&1&1&-2}.
\end{gather*}
We have:
$$
v_0=(1,0,-1, 0,0,0,0,0,0)=1/3(v_1+v_2+v_3+v_4+v_5+v_6)=v_1-v_7.
$$
To show that $\{v_0;v_1,\dots,v_7\}$ is an ENSS, it remains to verify that
$v_0$ is not a $\ZgO$-combination of vectors $v_1,\dots,v_7$. Consider the discriminating linear function 
$$
f=45n_1+15n_2+30n_3-3n_7-3n_8-6n_9.
$$
Obviously, 15 cannot be decomposed as a sum of integers 17, 14, and 2.

\prr\label{prr5}
 $\lambda=\pi_2$. The set $M(\pi_2)$ contains all the roots. The following set is an ENSS:
\begin{gather*}
\qmatrix{v_1\\v_2\\v_3\\v_4\\v_5\\v_6\\
\noalign{\medskip}
v_7}
=\qmatrix{
1&0&-1&0&0&0&0&0&0\\
1&-1&0&0&0&0&0&0&0\\
0&0&0&1&0&-1&0&0&0\\
0&0&0&1&-1&0&0&0&0\\
0&0&0&0&0&0&1&0&-1\\
0&0&0&0&0&0&1&-1&0\\
\noalign{\medskip}
1/3&1/3&-2/3&-2/3&1/3&1/3&-2/3&1/3&1/3}, \\
\noalign{\medskip}
v_0=\frac13(2,-1,-1;2,-1,-1;2,-1,-1) =\frac13\sum_{i=1}^6 v_i=v_1-v_7.
\end{gather*}
Using the discriminating linear function $f=12n_1+3n_2$, we easily show that $v_0$ is not a $\ZgO$-combination of vectors $v_1,\dots,v_7$. 
Indeed, the values of $f$ on vectors $v_1,\dots,v_7$ equal 0, 12, 9, and~5, while $f(v_0)=7$.

\medskip

Non-saturated subsets for the other fundamental weights can be constructed with the help of Examples~\ref{prr4} and~\ref{prr5}. The weights 
$\pi_3$ and $\pi_5$ can be reduced to Example~\ref{prr4}. For instance, for
~$\pi_3$ the vector $\pi_3'=\frac13(-2,4,-2;0,0,0;2,-1,-1)$ 
belongs to~$M(\pi_3)$, so the vector 
$$
\frac13(1,1,-2;0,0,0;2,-1,-1),
$$ 
which is the midpoint of the segment~$[\pi_3,\pi_3']$, also belongs to~$M(\pi_3)$.
Up to a cyclic permutation of the triples, this midpoint is~$\pi_6$. The vector~$\pi_5$ can be reduced to~$\pi_6$ in the same way.
Finally, the weight $\pi_4$ can be reduced to~$\pi_2$, because
$$
 \pi_2=\frac13\bigl(2\cdot(2,-1,-1;0,0,0;0,0,0)+(-1,2,-1;0,0,0;0,0,0)\bigr)\in M(\pi_4).
$$
We have shown that for the root system $E_6$ the set of weights of each nonzero representation contains an NSS.


\section{The root system $F_4$}
The set of roots is the following subset of~$\QQ^4$:
\begin{gather*}
\pm \vep_i \quad(1\leqslant i\leqslant 4); \quad
\pm\vep_i\pm\vep_j \quad(1\leq i<j\leq 4); \quad
\frac 12 (\pm \vep_1\pm \vep_2 \pm \vep_3 \pm \vep_4).
\end{gather*}

The simple roots are $\frac 12 (\vep_1-\vep_2-\vep_3-\vep_4)$, $\vep_4$,
$\vep_3-\vep_4$, $\vep_2-\vep_3 $. The corresponding fundamental weights are:
\begin{gather*}
\pi_1=(1,0,0,0),\;
\pi_2=(3/2,1/2,1/2,1/2), \;
\pi_3=(2,1,1,0), \;
\pi_4=(1,1,0,0).
\end{gather*}

The root and weight lattices coincide:
$$
\Xi=\Lambda=\ZZ^4\cup \bigl(\ZZ^4+(1/2,1/2,1/2,1/2)\bigr).
$$
The Weyl group contains all the permutations of coordinates and the sign change in any subset of coordinates. 

\slu{$\lambda=\pi_1$.} 
\label{sl1}
Here $M(\lambda)$ is the set of all roots of length~1. Suppose on the contrary that there exists an ENSS $(v_0;v_1,\ldots)$.

Let us verify that all nonzero volumes of 4-tuples of vectors in~$M(\lambda)$ equal~1 or~$1/2$. Consider any vectors
$v_1,v_2,v_3,v_4$. If at least two of them are of the form~$\pm \vep_i$, then the statement holds. Indeed, if there are only two such vectors, then
$D=\det(v_1,v_2,v_3,v_4)=1\cdot(\pm 1/4 \pm 1/4)$. If there are exactly three such vectors, then $D=1\cdot (\pm 1/2)$; if all four have this form, then $D=\pm 1$. 
Otherwise at least three vectors among the $v_i$s have the form $1/2(\pm \vep_1 \pm \vep_2 \pm \vep_3 \pm \vep_4)$. At least two of these vectors
have the same parity of the sum of coordinates.
Assume that these are the vectors $v_1,v_2$, and their sums of coordinates are odd. Then apply the symmetry~$s_{\alpha_0}$ with respect to the root 
 $\alpha_0=\frac12(1,1,1,1)$ to all the four vectors. After that the vectors~$v_1$ and~$v_2$ will have the form $\pm \vep_i$, and we have already checked the statement for the new 4-tuple of vectors.

Now, if we have an ENSS $\{v_0;v_1,\dots,v_s\}$, then the corresponding 
$\QgO$-combination consists of vectors with volume~1 (otherwise this combination will be integer by the Cramer formulae). From the previous 
reasoning we may assume that these vectors are $\vep_1$, $\vep_2$, $\vep_3$, $\vep_4$. Looking at the form of the weight lattice, we see that 
$v_0=\frac 12 (\vep_1+\vep_2+\vep_3+\vep_4)$ since all the coefficients of the $\QgO$-combination are less than~1.
Then there exists a vector~$v_k$ of the form $(\pm 1/2, \pm 1/2, \pm 1/2, \pm 1/2)$ among $v_1,\ldots,v_s$. Now we can represent~$v_0$ 
as a $\ZgO$-combination of $v_1,v_2,v_3,v_4,v_k$, adding to $v_k$ 
the vectors~$\vep_i$ with the indices $i$ such that the $i$th coordinate of the vector~$v_k$ is negative. We get a contradiction.

\begin{remark}
The set $M(\lambda)$ in this case is almost unimodular, see~\cite[Section~1.2]{Karina2}. The properties of almost unimodular sets were applied before 
by the second author in the similar situations, cf.~\cite[Case~5]{Karina2}.
\end{remark}

\medskip
\prr{$\lambda=\pi_4$.}
Let $v_1=\vep_1+\vep_2$, $v_2=\vep_1-\vep_2$, $v_3=\vep_1+\vep_3$,
$v_4=\vep_3$, $v_0=1/2(v_1+v_2)=v_3-v_4$. Obviously, $\{v_0;v_1,\dots,v_4\}$
is an ENSS.

\medskip
Fix $\alpha_0=(1/2,1/2,1/2,1/2)$. We show that $\pi_4$ is contained in the sets~$M(2\pi_1)$,
$M(\pi_2)$, $M(\pi_3)$. Indeed,
$$
\pi_4=\frac12((1,1,2,0)+(1,1,-2,0))\in M(\pi_3).
$$ 
Next,
$s_{\alpha_0}(\pi_2)=(0,-1,-1,-1)$, hence
$\pi_4=\frac12((1,1,1,0)+(1,1,-1,0))\in M(\pi_2)$. Finally 
$s_{\alpha_0}(2\pi_1)=(1,-1,-1,-1)$, which implies
$$
\pi_4=\frac12((1,1,1,-1)+(1,1,-1,1))\in M(2\pi_1).
$$

For all the other dominant weights, their decomposition in the fundamental weights contains either
$2\pi_1$, or $\pi_2$, or $\pi_3$, or $\pi_4$, and applying Lemma~\ref{l.ovl} we deduce the desired conclusion.

\section{The root system~$G_2$}
We realize $G_2$ in the 2-dimensional subspace $\{ \ell_1\vep_1+\ell_2\vep_2+\ell_3\vep_3 \mid \ell_1+\ell_2+\ell_3=0\}$ of the space $\QQ^3$. The following vectors are the roots:
\begin{gather*}
\pm (\vep_1-\vep_2), \; \pm (\vep_1-\vep_3), \; \pm (\vep_2-\vep_3), \; \pm (2\vep_1-\vep_2-\vep_3),
\\
\pm (2\vep_2-\vep_1-\vep_3), \; \pm (2\vep_3-\vep_1-\vep_2).
\end{gather*}

\bigskip
   \begin{figure}[!ht]
   \begin{center}
   \includegraphics[width=2in]{iskluch.1} \\
   Fig.~1
   \end{center}
   \end{figure}
\bigskip

The simple roots are the vectors $\alpha_1=(1, -1, 0)$ and $\alpha_2=(-2,1,1)$.
The fundamental weights are 
$$
  \pi_1=(0,-1,1),\quad
  \pi_2=(-1,-1,2).
$$
It is clear from Fig.~1 that the Weyl group is the dihedral group of order~12. The root and weight lattices coincide.

\slu $\lambda=\pi_1$. \label{sl2}
Then $M(\lambda)=\{\pm (0,1,-1), \pm (1,0,-1), \pm (1,-1,0)\}$, which is hereditarily normal.

\prr $\lambda=\pi_2$. It is the long dominant root, it is clear from Fig.~1 that $M(\lambda)$ coincides with the whole root system.
Consider the following vectors (marked by the thick edges in Fig.~1):
\begin{gather*}
\qmatrix{v_1 \\ v_2 \\ v_3} = \qmatrix{-1 & -1 & 2 \\ -2 & 1 & 1 \\ 0 & -1 & 1}.
\end{gather*}
Then $v_0=1/3(v_1+v_2)=(-1,0,1)=v_1-v_3$. Consider the last coordinate. It is clear that $v_0$ is not a $\ZgO$-combination of vectors 
$v_1$, $v_2$, $v_3$.

All the other dominant weights can be reduced to~$\pi_2$. Indeed, if the vector $\lambda=c_1\pi_1+c_2\pi_2$ has a nonzero coefficient $c_2$, 
then the set $M(\lambda)$ contains $M(\pi_2)$, hence $M(\lambda)$ is not hereditarily normal. If $c_2=0$ but
$\lambda\neq \pi_1$, then $c_2\geqslant 2$. Then the vector $2\pi_1=(0,-2,2)$ belongs to $M(\lambda)$. Consequently, the vectors $(-2,0,2)$ 
and~$(-1,-1,2)=\pi_2$ belong to $M(\lambda)$, see Fig.~1.

\smallskip
Now the proof of Theorem~\ref{mth} is completed.

\end{document}